\newtheorem{theorem}{Theorem}[section]
\newtheorem{definition}[theorem]{Definition}
\newtheorem{lemma}[theorem]{Lemma}
\newtheorem{remark}[theorem]{Remark}
\newenvironment{proof}[1][Proof]{\textbf{#1.} }{\ \rule{0.5em}{0.5em}}
\newcommand{\stkout}[1]{\ifmmode\text{\sout{\ensuremath{#1}}}\else\sout{#1}\fi}
\DeclareMathOperator\Var{Var}
\DeclareMathOperator\Cov{Cov}
\begin{document}

 \title{New asymptotic results for generalized Oppenheim expansions  }
 	\author{Rita Giuliano \footnote{Dipartimento di
		Matematica, Universit\`a di Pisa, Largo Bruno
		Pontecorvo 5, I-56127 Pisa, Italy (email: rita.giuliano@unipi.it)}~~and Milto Hadjikyriakou\footnote{School of Sciences, University of Central Lancashire, Cyprus campus, 12-14 University Avenue, Pyla, 7080 Larnaka, Cyprus (email:
		mhadjikyriakou@uclan.ac.uk). Part of this work was conducted while the author was a visiting scholar at the University of Cyprus.}} 	
 \maketitle

\begin{abstract}
In this work, we study convergence in probability and almost sure convergence for weighted partial sums of random variables that are related to the class of generalized Oppenheim expansions. It is worth noting that the random variables under study have infinite mean and the results are obtained without any dependence assumptions.
\end{abstract}

\textbf{Keywords}: Oppenheim expansions, convergence in probability, almost sure convergence, infinite means, stochastically dominated random variables.

\medskip

\textbf{MSC 2010:} 60F15,  60F05, 11K55

\section{Introduction}
In the case of independent and identically distributed random variables $(X_n)_{n\geq 1}$ with finite nonzero mean $\mu$, a celebrated asymptotic result by Kolmogorov ensures the almost sure convergence to 1 of the random quantity
\[
\frac{1}{n\mu}\sum_{j=1}^{n}X_j.
\]
Weak and strong asymptotic results for random variables with either zero or infinite mean can also be obtained,  however in these cases the procedure requires the correct adjustment for the involved weights. Asymptotic results of this form have been studied extensively by various authors, see for example \cite{A2000,A2007,A2008,A2012,AM2018,CR1961} and references therein, and more recently by \cite{GH2020} and \cite{BB2022}.

The framework of this work has been introduced and generalized in \cite{G2018} and \cite{GH2020} respectively and is described as follows: let $(B_n)_{n\geq  1}$ be a sequence of integer valued random  variables defined on $(\Omega, \mathcal{A}, P)$, where $\Omega =[0,1]$, $\mathcal{A}$ is the $\sigma$-algebra of the Borel subsets of $[0,1]$ and $P$ is the Lebesgue measure on $[0,1]$. Let $\{F_n, n\geq 1\}$ be a sequence of probability distribution functions defined on $[0,1]$ with  $F_n(0)=0$, $\forall n$ and moreover let $\varphi_n:\mathbb{N}^*\to \mathbb{R}^+$ be a sequence of functions. Furthermore, let $(q_n)_{n\geq  1}$ with $q_n=q_n(h_1, \dots, h_n)$ be a sequence of nonnegative numbers (i.e. possibly depending on the $n$ integers $h_1, \dots, h_n$) such that, for $h_1 \geq  1$ and $h_j\geq  \varphi_{j-1}(h_{j-1})$, $j=2, \dots, n$ we have
\[
P\big(B_{n+1}=h_{n+1}|B_{n}=h_{n}, \dots, B_{1}=h_{1}\big)= F_n(\beta_n)-F_n(\alpha_n),
\]
where 
\[
\alpha_n=\delta_n(h_n, h_{n+1}+1, q_n)  ,\quad \beta_n=\delta_n(h_n, h_{n+1}, q_n)\quad\mbox{with}\quad\delta_j(h,k, q) = \frac{ \varphi_j (h )(1+y )}{k+\varphi_j (h ) q  }.
\]
Let $Q_n= q_n(B_1, \dots, B_n)$ and define
\begin{equation}
\label{Rdef}R_{n}= \frac{ B_{n+1}+\varphi_n(B_n) Q_n}{\varphi_n(B_n)(1+Q_n) }= \frac{1}{\delta_n(B_n, B_{n+1}, Q_n)}.
\end{equation}
Particular cases of these random variables have been studied extensively over the years; for example, the L\"uroth series was studied in \cite{G1976} and \cite{L1883}, the Engel series was studied in \cite{E1913} and \cite{S1974} while the cases of Engel continued fraction expansions and the Sylvester series were explored in \cite{HKS2002} and \cite{P1960} respectively. More recently, in \cite{G2018}, a weak law has been established for the sequence $\displaystyle\frac{1}{n\log n}\sum_{k=1}^{n}R_k,$ while in \cite{GH2020} exact weak and strong laws (i.e. convergence either in probability or almost surely to a positive constant) have been proven for the sequence $\displaystyle \frac{1}{b_n}\sum_{k=1}^{n}a_kR_k$ for suitably chosen sequences of positive numbers $(a_n)_{n\geq 1}$ and $(b_n)_{n\geq 1}$. Moreover, in \cite{FG2021}, conditions are identified  under which $R_n^{-1}$ converges in distribution and the sequence $\displaystyle\sum_{k=1}^{n}\log R_k$ satisfies a central limit theorem. Let $\bar{G}_n(x) = P(R_n>x)$. It can be proven that for any $y>0,\,  \bar{G}_n(x+y)\sim \bar{G}_n(x)$ as $x\to\infty$ which means that for every integer $n$ the distribution of random variable $R_n$  is a \textit{long-tailed} distribution. Long-tailed distributions are used in many different research areas such as in finance and actuarial, machine learning and artificial intelligence (see for example \cite{FKZ2011} for more on long-tailed distributions). Thus, the asymptotic behavior of the particular model is essential to be studied due to the wide applicability of long-tailed distributions. 

Throughout the paper, the notation $a_n\sim b_n$, will be used to denote $\displaystyle\lim_{n\to \infty}\frac{a_n}{b_n}=1$ while the constant $C$ will stand for a real number not necessarily the same in every appearance. Moreover, the symbol $I(A)$ denotes the indicator function of the set $A$, while $a\wedge b$ will be used to express the $\min\{a,b\}$.  The purpose of this work is to study the asymptotic behavior of 
\[
\frac{1}{b_n^p}\sum_{j=1}^{n}a_j(R_j-Eg(R_j))
\]
for suitably chosen contant $p$, sequences of positive numbers $(a_n)_{n\geq 1}$ and $(b_n)_{n\geq 1}$ and function $g(\cdot)$. This kind of results are in the same line as in \cite{N2020} and are of interest in the case of variables with infinite mean.

\medskip

The paper is structured as follows: Section 2 includes some preliminary results that are essential for the rest of the paper while Sections 3 provides a series of results related to convergence in probability under various assumptions. Finally, in Section 4
we discuss a result concerning almost sure convergence.

\section{Preliminaries}
The below result is proved in \cite{GH2020} (see Lemma 3 there).
\begin{lemma}
	Let $(R_n)_{n\geq 1}$ be as  in \eqref{Rdef}. Then, for any integer $n$ and for $x\geq 1$,
	\[
	E\left[F_n\left(\frac{\varphi_n(B_n)(1+Y_n)}{x\varphi_n(B_n)(1+Y_n)+1  }\right)\right] \leq P(R_n>x)\leq F_n\left(\frac{1}{x}\right).
	\]
\end{lemma}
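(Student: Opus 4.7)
The plan is to reduce the tail probability $P(R_n>x)$ to a tail probability for $B_{n+1}$ by inverting the relation $R_n=1/\delta_n(B_n,B_{n+1},Q_n)$, and then exploit the telescoping structure hidden in the conditional law of $B_{n+1}$.

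First I would observe that, for fixed $h$ and $q$, the map $k\mapsto \delta_n(h,k,q)=\frac{\varphi_n(h)(1+q)}{k+\varphi_n(h)q}$ is strictly decreasing in $k$, so the identity
\[
\delta_n(h,k+1,q)=\delta_n(h,k,q)\bigl|_{k\to k+1}
\]
means that $\alpha_n(k)=\beta_n(k+1)$. Consequently, summing the conditional probabilities in a telescoping fashion,
\[
P(B_{n+1}\ge h\mid B_n=h_n,\dots,B_1=h_1)=\sum_{k\ge h}\bigl[F_n(\beta_n(k))-F_n(\beta_n(k+1))\bigr]=F_n\bigl(\delta_n(h_n,h,q_n)\bigr),
\]
using $F_n(0)=0$ and $\delta_n(h_n,k,q_n)\downarrow 0$ as $k\to\infty$.

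Next I would translate the event $\{R_n>x\}$. Since $R_n=1/\delta_n(B_n,B_{n+1},Q_n)$ and $\delta_n$ is decreasing in its second argument, the event $\{R_n>x\}$ conditional on $B_n=h_n$, $Q_n=q_n$ is exactly $\{B_{n+1}\ge h^*\}$ where
\[
h^*=\min\{h\in\mathbb N^* : \delta_n(h_n,h,q_n)<1/x\}.
\]
Applying the telescoped formula above gives
\[
P(R_n>x\mid B_n=h_n,Q_n=q_n)=F_n\bigl(\delta_n(h_n,h^*,q_n)\bigr).
\]

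The upper bound is immediate: by definition of $h^*$ we have $\delta_n(h_n,h^*,q_n)<1/x$, and $F_n$ is nondecreasing, so $F_n(\delta_n(h_n,h^*,q_n))\le F_n(1/x)$. For the lower bound, the minimality of $h^*$ forces $h^*-1$ to fail the inequality, i.e.\ $\delta_n(h_n,h^*-1,q_n)\ge 1/x$, which rearranges to $h^*\le x\varphi_n(h_n)(1+q_n)-\varphi_n(h_n)q_n+1$. Substituting this upper estimate into the denominator of $\delta_n(h_n,h^*,q_n)$ yields
\[
\delta_n(h_n,h^*,q_n)\ge \frac{\varphi_n(h_n)(1+q_n)}{x\varphi_n(h_n)(1+q_n)+1},
\]
and monotonicity of $F_n$ plus taking expectation over $(B_n,Q_n)$ delivers the stated lower bound. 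The only place where care is needed is the telescoping step: one must verify that the conditioning on the whole past truly reduces to a sum over $h_{n+1}$ with the telescoping identity $\alpha_n(k)=\beta_n(k+1)$, and that boundary behavior $F_n(\delta_n(h_n,k,q_n))\to 0$ as $k\to\infty$ is justified by $F_n(0)=0$ together with right-continuity of $F_n$ at $0$; this is the main (though mild) obstacle.
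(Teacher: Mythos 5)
Your argument is correct and is essentially the proof of Lemma 3 of \cite{GH2020}, which this paper cites rather than reproduces: one telescopes the conditional law of $B_{n+1}$ via the identity $\alpha_n(k)=\beta_n(k+1)$ together with $F_n(0+)=0$, and then inverts the monotone relation $R_n=1/\delta_n(B_n,B_{n+1},Q_n)$ exactly as you do, with the minimality of $h^*$ giving the two-sided control on $\delta_n(h_n,h^*,q_n)$. One remark: your computation correctly yields the lower bound with $Q_n$, not the $Y_n$ printed in the statement above; that $Y_n$ (like the ``$1+y$'' in the displayed definition of $\delta_j$) is a typo in the present paper, since $Y_n=1/U_n$ is only introduced after the lemma.
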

The right inequality above can be interpreted as follows: let $U_n$ be a  random variable with distribution $F_n$ and define $Y_n = \displaystyle \frac{1}{U_n}$ for every integer $n$. Then
\begin{equation}
\label{stdom}P(R_n>x)\leq P(Y_n>x),\quad x\geq 1.
\end{equation}
i.e., the sequence $(R_n)_{n\geq 1}$ is \textit{stochastically dominated} by the sequence $(Y_n)_{n\geq 1}$. 

\medskip

The following result provides useful moment inequalities that link the variables $R_n$ and $Y_n$. These inequalities are modified versions of known results for the needs of our framework (see for example, Lemma 1 of \cite{AR1987} and Lemma 3 of \cite{ART1989}).

\begin{lemma}
	\label{trunmom}Let $(R_n)_{n\geq 1}$ and $(Y_n)_{n\geq 1}$ be as in \eqref{Rdef} and \eqref{stdom} respectively. Then, for every $q>0$, $t\geq 1$ and every integer $n$,
	\begin{equation}
	\label{trun1}E(R_n^qI(R_n\leq t)) \leq 1+ t^qP(Y_n>t)+E(Y_n^qI(Y_n\leq t))
	\end{equation}
	\begin{equation}
	\label{trun2}E(R_n^qI(R_n>t)) \leq E(Y_n^qI(Y_n> t)).
	\end{equation}
\end{lemma}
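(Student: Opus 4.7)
The plan is to combine the layer-cake representation $E[Z]=\int_0^\infty P(Z>u)\,du$ with the stochastic domination bound (\ref{stdom}), being careful that (\ref{stdom}) is only valid for arguments $\geq 1$. This is the source of the seemingly mysterious $+1$ in (\ref{trun1}): the region where the tail argument drops below $1$ has to be handled by the trivial bound $P(\cdot)\leq 1$.

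For (\ref{trun1}), I would first write
\[
E\bigl(R_n^q I(R_n\leq t)\bigr)=\int_0^{t^q} P\bigl(u^{1/q}<R_n\leq t\bigr)\,du,
\]
split the integral at $u=1$, and bound the contribution on $[0,1]$ by the trivial estimate $P(\cdot)\leq 1$, contributing the term $1$. On $[1,t^q]$ the threshold $u^{1/q}$ is at least $1$, so (\ref{stdom}) gives $P(R_n>u^{1/q})\leq P(Y_n>u^{1/q})=P(Y_n^q>u)$. Extending this integral back down to $0$ only enlarges it, and the resulting integral $\int_0^{t^q} P(Y_n^q>u)\,du$ is exactly $E(Y_n^q\wedge t^q)$, which splits as $E(Y_n^q I(Y_n\leq t))+t^q P(Y_n>t)$. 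Collecting pieces yields (\ref{trun1}).

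For (\ref{trun2}), the same layer-cake argument gives
\[
E\bigl(R_n^q I(R_n>t)\bigr)=t^q P(R_n>t)+\int_{t^q}^\infty P(R_n>u^{1/q})\,du,
\]
and now both tail probabilities are evaluated at points $\geq t\geq 1$, so (\ref{stdom}) applies directly with no boundary term. The right-hand side is dominated by $t^q P(Y_n>t)+\int_{t^q}^\infty P(Y_n>u^{1/q})\,du$, which is precisely $E(Y_n^q I(Y_n>t))$ after reversing the layer-cake computation.

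There is no real obstacle here beyond being attentive to the range-of-validity restriction in (\ref{stdom}); the only delicate point is locating the split at $u=1$ in the first inequality and verifying that in the second inequality the domain of integration already ensures $u^{1/q}\geq 1$, so no such correction is needed.
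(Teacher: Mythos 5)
Your proof is correct and is essentially the paper's argument in different notation: the paper's integration-by-parts identity $q\int_0^t s^{q-1}P(X>s)\,{\rm d}s=t^qP(X>t)+E[X^qI(X\le t)]$ is exactly your layer-cake formula after the substitution $u=s^q$, and both proofs split at the point where the tail argument equals $1$, use the trivial bound $P(\cdot)\le 1$ there to produce the additive $1$, and apply the stochastic domination \eqref{stdom} on the remaining range. No gaps.
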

\begin{proof}
Let $X$ be a random variable with distribution function $F$ ($\overline F = 1-F$). Let $0\leq c \leq t$ and $q>0$. Then, integrating by parts, 
\begin{align*}
&q\int_c^t s^{q-1}P(X>s){\rm d} s= \big[s^q P(X>s)\big]_c^t - \int_c^ts^q {\rm d} \overline F (s)\\
	&=t^q P(X>t)-c^q P(X>c) +\int_c^ts^q {\rm d}   F(s)= t^q P(X>t)-c^q P(X>c) +\int s^qI(c\leq s \leq t){\rm d}   F(s)\\
	&=t^q P(X>t)-c^q P(X>c) +\int X^qI(c\leq X \leq t) {\rm d}P=   t^q P(X>t)-c^q P(X>c) +E\big[X^qI(c\leq X \leq t)\big].
\end{align*}
By taking $c=0$ we find
\begin{equation}\label{eq1}
q\int_0^t s^{q-1}P(X>s){\rm d} s= t^q P(X>t) +E\big[X^qI(0\leq X \leq t)\big]= t^q P(X>t) + E\big[X^qI(X \leq t)\big].
\end{equation}
Moreover, if $X$ is assumed to be a nonnegative random variable, the above expression gives 
\begin{align}
&\nonumber q\int_c^t s^{q-1}P(X>s){\rm d} s\leq t^q P(X>t)+E\big[X^qI(c\leq X \leq t)\big]\leq t^q P(X>t)+E\big[X^qI(0\leq X \leq t)\big]\\
&=\label{eq2} t^q P(X>t)+ E\big[X^qI(X \leq t)\big].
\end{align} 

\noindent Recall that $R_n \geq 1$. The expressions obtained above lead to the following calculations for $t \geq 1$, 
\begin{align*}&
E\big[R_n^q I(R_n\leq t)\big]\leq E\big[R_n^q I(R_n\leq t)\big]+ t^q P(R_n>t)= q\int_0^t s^{q-1}P(R_n>s){\rm d} s \\& =  q\int_0^1 s^{q-1}P(R_n>s){\rm d} s+ q\int_1^t s^{q-1}P(R_n>s){\rm d} s\leq 
q\int_0^1 s^{q-1} {\rm d} s+ q\int_1^t s^{q-1}P(Y_n>s){\rm d} s\\
& \leq\big[s^q\big]_0^1+ t^q P(Y_n>t)+ E\big[Y_n^qI(Y_n\leq t)\big]=1+ t^q P(Y_n>t)+ E\big[Y_n^qI(Y_n\leq t)\big],\end{align*}
where the first inequality follows from  \eqref{stdom} while the second one  derives from \eqref{eq2} with $Y_n$ in place of $X$.

\medskip

\noindent For \eqref{trun2}, first recall that for any nonnegative random variable $X$ we have that 
\[
EX = xP(X>x)+\int_{x}^{\infty}P(X>u){\rm d}u.
\]
Thus, for $t\geq1$, with $X= R_n^q I(R_n >t)$ and the change of variable $u=x^q$
\begin{eqnarray*}
	E(R_n^qI(R_n>t)) &=&t^qP(R_n>t)+q\int_{t}^{\infty}x^{q-1}P(R_n>x){\rm d}x\\
	&\leq& t^qP(Y_n>t)+q\int_{t}^{\infty}x^{q-1}P(Y_n>x){\rm d}x\\
	&=&E(Y_n^qI(Y_n>t))
\end{eqnarray*}
where the inequality is due to the stochastic dominance of $R_n$ from $Y_n$ for any integer $n$.
\end{proof}

\begin{lemma}
	\label{lemma1}Let $(R_n)_{n\geq 1}$ be as in \eqref{Rdef} with the related sequence of distributions $(F_n)_{n\geq 1}$ satisfying the below condition for some $\alpha>0$
	\begin{equation}
	\label{cond2F}\limsup_{x\to 0}\sup_{n\geq 1} \frac{F_n(x)}{x^{\alpha} } =: L <\infty.
	\end{equation}
	Let $(a_n)_{n\geq 1}$ and $(b_n)_{n\geq 1}$ be sequences of positive numbers such that
	\begin{equation}
		 \label{condconst}\sum_{j=1}^{n}a_j^{\alpha} = o(b_n^{\alpha}) \quad \mbox{ as } n \to \infty.
	\end{equation}
	Then,
	\begin{equation}
	\label{sumProb}\sum_{j=1}^{n}P\left(R_j>\frac{b_n}{a_j}\right)\to 0 \mbox{ as }n\to \infty.
	\end{equation}	
\end{lemma}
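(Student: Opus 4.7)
The proof will combine the stochastic domination \eqref{stdom} with the uniform bound \eqref{cond2F} on $F_n$ near zero. The plan is as follows.

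First I would observe that condition \eqref{condconst} forces uniform smallness of $a_j/b_n$: since each term in the sum is dominated by the whole sum, $\max_{1\le j\le n} a_j^\alpha \le \sum_{j=1}^n a_j^\alpha = o(b_n^\alpha)$, so $\max_{1\le j\le n} a_j/b_n \to 0$ as $n\to\infty$. In particular, $b_n/a_j \ge 1$ for all $j\le n$ once $n$ is large enough, which is exactly the regime where the stochastic domination \eqref{stdom} applies. Using $P(Y_j > x) = P(U_j < 1/x) = F_j(1/x)$, this yields
\[
P\!\left(R_j > \frac{b_n}{a_j}\right) \le P\!\left(Y_j > \frac{b_n}{a_j}\right) = F_j\!\left(\frac{a_j}{b_n}\right).
\]

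Next I would exploit \eqref{cond2F}. Fix $\varepsilon>0$; by the definition of the $\limsup$, there exists $\eta>0$ such that $F_n(x) \le (L+\varepsilon)x^\alpha$ for every $n\ge 1$ and every $x\in(0,\eta]$. By the uniform smallness established above, there is $n_0$ such that $a_j/b_n \le \eta$ for all $j\le n$ and all $n\ge n_0$. Therefore, for $n\ge n_0$,
\[
\sum_{j=1}^{n} P\!\left(R_j > \frac{b_n}{a_j}\right) \le \sum_{j=1}^{n} F_j\!\left(\frac{a_j}{b_n}\right) \le (L+\varepsilon)\sum_{j=1}^{n} \left(\frac{a_j}{b_n}\right)^{\alpha} = \frac{L+\varepsilon}{b_n^\alpha}\sum_{j=1}^{n} a_j^{\alpha}.
\]
The right-hand side tends to $0$ by \eqref{condconst}, which gives \eqref{sumProb}.

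The argument has no real obstacle; the only subtle point is the passage from the pointwise-in-$n$ condition \eqref{cond2F} to a uniform bound valid for all the arguments $a_j/b_n$ at once, which is handled precisely by the observation that $\max_{j\le n} a_j/b_n \to 0$. Everything else is bookkeeping: apply stochastic domination to pass from $R_j$ to $Y_j$, rewrite tails of $Y_j$ as values of $F_j$ near zero, and then invoke \eqref{condconst}.
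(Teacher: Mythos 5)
Your proof is correct and follows essentially the same route as the paper: both arguments note that \eqref{condconst} forces $\max_{j\le n} a_j/b_n\to 0$ (the paper bounds each term by the whole sum $\sum_{j=1}^n(a_j/b_n)^\alpha<\delta^\alpha\wedge 1$, which is the same observation), then apply $P(R_j>b_n/a_j)\le F_j(a_j/b_n)$ and the uniform bound $F_j(x)\le L'x^\alpha$ near zero from \eqref{cond2F}. No gaps; the only cosmetic difference is writing $L+\varepsilon$ instead of $L'>L$.
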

\begin{proof}
Let $L^\prime > L$. There exists $\delta >0$ such that  $ \sup_k \frac{F_k(x)}{x^\alpha}< L^\prime$ for every $x \in (0,\delta).$  Let $n_0$ be such that, for $n> n_0$,
$$\sum_{j=1}^n \Big(\frac{a_j}{b_n}\Big)^\alpha < \delta^\alpha \wedge 1.$$ Then, for every $n>n_0$ and  every $j= 1, \dots, n$,

\begin{align*}&
\frac{a_j}{b_n}= \Big\{\Big(\frac{a_j}{b_n}\Big)^\alpha\Big\}^\frac{1}{\alpha}\leq \Big\{\sum_{j=1}^n \Big(\frac{a_j}{b_n}\Big)^\alpha \Big\}^\frac{1}{\alpha}< \delta \wedge 1.
\end{align*}
Thus, for every $n>n_0$,
\begin{align*}&
\sum_{j=1}^n P\Big(R_j > \frac{b_n}{a_j}\Big)\leq \sum_{j=1}^n \sup_k \frac{F_k(\frac{a_j}{b_n})}{(\frac{a_j}{b_n})^\alpha}\Big(\frac{a_j}{b_n}\Big)^\alpha <  L^\prime\sum_{j=1}^n \Big(\frac{a_j}{b_n}\Big)^\alpha,
\end{align*}
and we conclude by passing to the limit as $n \to \infty$.
\end{proof}

\medskip

\begin{remark}
	A similar result has been obtained in \cite{BB2022} (see Lemma 2.2 (ii) there) for random variables $(X_n)_{n\geq 1}$ satisfying conditions
	\begin{equation}
	\label{condBB1} P(|X|>x) \asymp x^{-\alpha} 
	\end{equation}
	and
	\begin{equation}
	\label{condBB2} \limsup_{x\to\infty} \sup_{n\geq 1}x^{\alpha}P(|X_n|>x)<\infty.
	\end{equation}
for some  $0<\alpha\leq 1$. Note that the uniformity condition in \eqref{cond2F} is used in \cite{GH2020}. It is worth pointing out that since $P(Y_n>x) = F_n(x^{-1})$, relation \eqref{cond2F} leads to
	\[
	\limsup_{x\to\infty}\sup_{n\geq 1}x^{\alpha}P(Y_n>x)<\infty
	\]
	which implies infinite mean for the involved random variables for $\alpha\in (0,1]$.
\end{remark}
 
\medskip
 
\noindent Next, we define a new sequence of random variables based on truncation on $R_n$. Let $(a_n)_{n\geq 1}$ and $(b_n)_{n\geq 1}$ be sequences of positive numbers.  For $1\leq j\leq n$, we define,
\begin{equation}
\label{Rnjdef}R_{nj} = R_jI\left(R_j\leq\frac{b_n}{a_j}\right)+\frac{b_n}{a_j}I\left(R_j>\frac{b_n}{a_j}\right).
\end{equation}

\begin{lemma}
\label{expIneq}	Let $\{R_n\}_{n\geq 1}$ be as in \eqref{Rdef} and assume that the conditions of Lemma \ref{lemma1} are satisfied. Consider $p\geq\alpha$. Then for every $L'>L$, there exists $n_0\in\mathbb{N}$ such that for $n>n_0$ and $1\leq j\leq n$
	\begin{equation}
	\label{exp2}ER_{nj}^{p} \leq 
	\begin{cases}
	C_{L^\prime}\left(\dfrac{b_n}{a_j}\right)^{p-\alpha}, & p>\alpha\\
	D_{L^\prime}\left(\frac{b_n}{a_j}\right)^\alpha,  & p=\alpha,
	\end{cases}
	\end{equation}
where $C_{L^\prime} = 1+\dfrac{3p-2\alpha}{p-\alpha}L'$ and $D_{L^\prime} = 1+2L'$.
\end{lemma}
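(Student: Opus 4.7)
The starting point is the decomposition
$$E R_{nj}^p = E[R_j^p I(R_j\leq t)] + t^p P(R_j > t), \qquad t := b_n/a_j.$$
Applying \eqref{trun1} to the first summand and \eqref{stdom} to the second yields
$$E R_{nj}^p \leq 1 + 2\,t^p P(Y_j > t) + E[Y_j^p I(Y_j \leq t)].$$

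Fix $L' > L$. Condition \eqref{cond2F} supplies $\delta \in (0,1)$ with $F_n(x) \leq L' x^\alpha$ for every $x \in (0,\delta)$ and every $n$; arguing as in the proof of Lemma \ref{lemma1}, one can choose $n_0$ so that $a_j/b_n < \delta$ for all $n > n_0$ and all $j \leq n$. Then $P(Y_j > t) = F_j(a_j/b_n) \leq L'(a_j/b_n)^\alpha = L' t^{-\alpha}$, and hence $2t^p P(Y_j > t) \leq 2L' t^{p-\alpha}$.

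For $E[Y_j^p I(Y_j \leq t)]$ note that $Y_j \geq 1$ a.s., and apply \eqref{eq1} to $Y_j$ to get
$$E[Y_j^p I(Y_j \leq t)] \leq p\int_0^t s^{p-1} P(Y_j>s)\,ds = 1 + p\int_1^t s^{p-1} F_j(1/s)\,ds.$$
When $p > \alpha$, split the last integral at $s = 1/\delta$, bounding $F_j(1/s) \leq 1$ on $[1,1/\delta]$ and $F_j(1/s) \leq L' s^{-\alpha}$ on $[1/\delta, t]$. A direct computation yields an upper bound of the form $\delta^{-p} - 1 + \frac{pL'}{p-\alpha}\,t^{p-\alpha}$. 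Combined with the previous step and the identity $2L' + \frac{pL'}{p-\alpha} = \frac{(3p-2\alpha)L'}{p-\alpha}$, this gives
$$E R_{nj}^p \leq (1 + \delta^{-p}) + \frac{(3p-2\alpha)L'}{p-\alpha}\, t^{p-\alpha}.$$
In the boundary case $p = \alpha$ the analogous integral is only of logarithmic order, so it is cleaner to invoke the trivial bound $E[Y_j^\alpha I(Y_j \leq t)] \leq t^\alpha$, leading to $E R_{nj}^\alpha \leq 1 + 2L' + t^\alpha$.

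To match the stated forms of $C_{L'}$ and $D_{L'}$, the bounded remainders must be absorbed into the leading $t^{p-\alpha}$ (resp.\ $t^\alpha$) term; the required inequalities reduce to $t^{p-\alpha} \geq 1 + \delta^{-p}$ (resp.\ $t^\alpha \geq 1 + 1/(2L')$). Since \eqref{condconst} implies $\max_{j \leq n} a_j/b_n \to 0$, enlarging $n_0$ if necessary makes these inequalities hold uniformly in $j \leq n$. This last absorption is the main technical delicacy, since $\delta$ is fixed by the choice of $L'$ and cannot be shrunk without revisiting $L'$; uniform smallness across all $j \leq n$, rather than just pointwise smallness, is what closes the argument.
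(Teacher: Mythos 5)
Your proof is correct and follows essentially the same route as the paper: the same decomposition of $ER_{nj}^{p}$, the same appeal to Lemma \ref{trunmom} and \eqref{stdom} to reach $1+2t^{p}P(Y_j>t)+E[Y_j^{p}I(Y_j\leq t)]$, and the same tail bound $P(Y_j>t)\leq L't^{-\alpha}$ drawn from \eqref{cond2F}. Your two deviations are refinements rather than a different method, and both are to your credit: splitting the integral at $s=1/\delta$ repairs the fact that $F_j(x)\leq L'x^{\alpha}$ is only guaranteed for $x<\delta$ (the paper applies it over the whole range without comment, at the price of a harmless extra constant that you then absorb using $\min_{j\leq n}b_n/a_j\to\infty$), and in the boundary case $p=\alpha$ your trivial bound $E[Y_j^{\alpha}I(Y_j\leq t)]\leq t^{\alpha}$ actually recovers the constant $D_{L'}=1+2L'$ as stated in the lemma, whereas the paper's logarithmic estimate ends with $1+3L'$.
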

\begin{proof}
	First we start with $p>\alpha$. Then, for every $1\leq j\leq n$,
	\[
	R_{nj}^{p} = \left(\dfrac{b_n}{a_j}\right)^{p}I\left(R_j>\dfrac{b_n}{a_j}\right) + R_j^pI\left(R_j\leq\dfrac{b_n}{a_j}\right)
	\]
	which gives
		\[
	ER_{nj}^{p} = \left(\dfrac{b_n}{a_j}\right)^{p}P\left(R_j>\dfrac{b_n}{a_j}\right) + ER_j^pI\left(R_j\leq\dfrac{b_n}{a_j}\right).
	\]
Lemma \ref{trunmom} will be employed for the second term and hence
\[
ER_{nj}^{p} \leq \left(\dfrac{b_n}{a_j}\right)^{p}P\left(R_j>\dfrac{b_n}{a_j}\right) + 1+\left(\dfrac{b_n}{a_j}\right)^{p}P\left(Y_j>\dfrac{b_n}{a_j}\right) +E\left(Y_j^pI\left(Y_j\leq \dfrac{b_n}{a_j}\right)\right).
\]

\noindent Following the lines of the proof of Lemma \ref{lemma1} we can find $\nu$ such that for every $n>\nu$ and for every $1\leq j\leq n$ we have that $\displaystyle\frac{a_j}{b_n}\leq 1$ and therefore  \eqref{stdom} can be used leading to
	\begin{equation}
	\label{momentineq}ER_{nj}^{p}  \leq 1+2\left(\dfrac{b_n}{a_j}\right)^{p}P\left(Y_j>\dfrac{b_n}{a_j}\right) +E\left(Y_j^pI\left(Y_j\leq \dfrac{b_n}{a_j}\right)\right).
	\end{equation}
	Fix $L'>L$. Then, by \eqref{cond2F}, there is an integer $n_0>\nu$ such that $\forall n\geq n_0$ and $1\leq j\leq n$
	\begin{equation}
\label{ineqY}	P\left(Y_j>\dfrac{b_n}{a_j}\right) \leq L'\left(\dfrac{b_n}{a_j}\right)^{-\alpha}
	\end{equation}
	(see the proof of Lemma \ref{lemma1}). Thus, for $n \geq n_0$, $p-\alpha>0$ and $1\leq j\leq n$, again by \eqref{cond2F},
	\begin{eqnarray*}
	ER_{nj}^{p}&\leq& 1 +2L^\prime\left(\frac{b_n}{a_j}\right)^p\left(\frac{b_n}{a_j}\right)^{-\alpha} +  \int_{1}^{(b_n/a_j)^p}P(Y_j^p>t){\rm d}t\\
	&\leq& 1 +2L^\prime\left(\frac{b_n}{a_j}\right)^{p-\alpha}+L^\prime \int_{1}^{(b_n/a_j)^p}t^{-\alpha/p}{\rm d}t\\
	&  \leq& 1 +2L^\prime\left(\frac{b_n}{a_j}\right)^{p-\alpha}+\frac {pL^\prime}{p-\alpha}\left(\frac{b_n}{a_j}\right)^{p-\alpha}\\
	&\leq&C_{L^\prime}\left(\frac{b_n}{a_j}\right)^{p-\alpha}
\end{eqnarray*}
for $C_{L^\prime}= 1 + \frac{3p-2\alpha}{p-\alpha}L ^\prime$, since by the arguments used in the proof of Lemma \ref{lemma1} we have that  $$\left(\frac{b_n}{a_n}\right)^{p-\alpha}>1.$$ For the case where $p=\alpha$ and for sufficiently large $n$, by utilizing \eqref{momentineq} and \eqref{ineqY} we can write
\begin{eqnarray*}
	ER_{nj}^{p}&\leq&1 + 2L'+\int_{1}^{(b_n/a_j)^\alpha}P(Y_j^\alpha>t){\rm d}t\\
	&\leq&1+2L'+ L'\int_{1}^{(b_n/a_j)^\alpha}\frac{1}{t}{\rm d}t\\
	&\leq&1 +2L' +L'\log\left(\frac{b_n}{a_j}\right)^\alpha\\
	&\leq&\begin{cases}
		(1+2L')\left(\frac{b_n}{a_j}\right)^\alpha+L'\left(\frac{b_n}{a_j}\right)^\alpha\\
		(1+2L')\alpha\left(\frac{b_n}{a_j}\right)+\alpha L'\left(\frac{b_n}{a_j}\right)
	\end{cases}\\
&\leq&D_{L^\prime}\left(\left(\frac{b_n}{a_j}\right)^\alpha\wedge \alpha\left(\frac{b_n}{a_j}\right)\right),
\end{eqnarray*}	
where $D_{L^\prime} = 1+3L'$. Note that in order to obtain the expressions above we used that $\log x\leq x,\, \mbox{for } x>0$ and that for sufficiently large $n$ $$\left(\frac{b_n}{a_n}\right)^{\alpha}>1 \quad \mbox{and} \quad \alpha\left(\frac{b_n}{a_n}\right)>1.$$ 
\end{proof}

	

\begin{remark}
	In \cite{BB2022} (see Lemma 2.2(i) there) a similar moment inequality has been obtained for random variables that satisfy conditions \eqref{condBB1} and \eqref{condBB2} and it is used for obtaining asymptotic results for random variables that satisfy either a Rosenthal-type maximal inequality or a Marcinkiewicz-Zygmund type inequality. In our framework, the desired moment inequality is obtained by using the stochastic dominance as an instrumental result together with condition \eqref{cond2F} which refers to the random variables $(Y_n)_{n\geq 1}$. Moreover, the asymptotic results that are obtained in the next sections are proven without any assumptions on the dependence structure of the random variables $(R_n)_{n\geq 1}$ or by imposing any further conditions that are commonly used as tools in obtaining asymptotic results.
\end{remark}

\section{Weak laws of large numbers}
In this section, we provide some weak laws of large numbers for the sequence $(R_n)_{n\geq 1}$. Particularly, we identify conditions under which the convergence in probability is established for the sequence
\[
\frac{1}{b_n^{p}}\sum_{j=1}^{n}a_j(R_j-Eg(R_j)),
\]
where $p\geq 1$ and $g(\cdot)$ is a suitably chosen function. The symbol $\displaystyle\mathop {\longrightarrow }^P$ indicates convergence in probability.

\medskip

 \begin{theorem}
	\label{weak1}Let $(R_n)_{n\geq 1 }$ be as defined in \eqref{Rdef}. Assume that 
	\begin{enumerate}
	\item[i.] there exists $M<\infty$ such that $\forall j =1,2,\ldots$
	\begin{equation}
\label{F1cond}	F_j(x)- F_j(y)\leq M(x-y),\quad\mbox{for}\quad x>y;
	\end{equation}
	\item[ii.] there exists $\alpha>0$ and $L>0$ such that
\begin{equation}
\label{F2cond}	\lim_{x\to\infty}\sup_n\left | \dfrac{F_n(x)}{x^\alpha}-L\right |=0.
	\end{equation}
	\end{enumerate}
	Furthermore, assume that the sequences $(a_n)_{n\geq 1}$ and $(b_n)_{n\geq 1}$ satisfy condition \eqref{condconst} and that for some $p>1$, \, $n/b_n^{p-1} \to 0$ as $n \to \infty$.
\noindent Then,
	\[
	\frac{1}{b_n^p}\sum_{j=1}^{n}a_j(R_j-ER_{nj}) \mathop {\longrightarrow }^P 0\quad n\to \infty.
	\]
\end{theorem}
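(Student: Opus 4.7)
The plan is to decompose
\[
\frac{1}{b_n^p}\sum_{j=1}^n a_j\bigl(R_j - ER_{nj}\bigr) \;=\; A_n + B_n,
\]
where $A_n:=\frac{1}{b_n^p}\sum_{j=1}^n a_j(R_j - R_{nj})$ is the truncation error and $B_n:=\frac{1}{b_n^p}\sum_{j=1}^n a_j(R_{nj}-ER_{nj})$ is the centred truncated sum. Since no independence among the $R_j$ is assumed, variance-based arguments are unavailable and the two pieces have to be handled in genuinely different ways: $A_n$ via a probability-of-exceedance estimate (which is forced on us because the raw $R_j$ may not be integrable), while $B_n$ via an $L^{1}$ bound that relies on the moment estimate of Lemma~\ref{expIneq}.

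For $A_n$, the definition~\eqref{Rnjdef} gives $R_j - R_{nj}=(R_j - b_n/a_j)\,I(R_j>b_n/a_j)\ge 0$, so $A_n$ vanishes off the event $\bigcup_{j=1}^n\{R_j>b_n/a_j\}$. Assumption~\eqref{F2cond} evidently implies~\eqref{cond2F}, and~\eqref{condconst} is in force, so Lemma~\ref{lemma1} gives $\sum_{j=1}^n P(R_j>b_n/a_j)\to 0$. Consequently $P(A_n\ne 0)\to 0$, and in particular $A_n\mathop{\longrightarrow}^P 0$.

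For $B_n$, Markov's inequality together with the crude bound $|R_{nj}-ER_{nj}|\le R_{nj}+ER_{nj}$ yields
\[
P(|B_n|>\varepsilon)\;\le\;\frac{2}{\varepsilon\, b_n^{p}}\sum_{j=1}^n a_j\, ER_{nj},
\]
so it suffices to show that the right-hand side tends to~$0$. I would apply Lemma~\ref{expIneq} with moment exponent $q:=\max(1,\alpha)$: when $\alpha<1$ it produces $ER_{nj}\le C(b_n/a_j)^{1-\alpha}$ directly, and when $\alpha\ge 1$ Jensen's inequality applied to $ER_{nj}^{\alpha}\le D(b_n/a_j)^{\alpha}$ gives $ER_{nj}\le D^{1/\alpha}(b_n/a_j)$. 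Substituting,
\[
\frac{1}{b_n^{p}}\sum_{j=1}^n a_j\, ER_{nj}\;\le\;
\begin{cases}
C\,b_n^{-(p-1+\alpha)}\sum_{j=1}^n a_j^{\alpha}, & \alpha<1,\\[4pt]
D^{1/\alpha}\, n/b_n^{p-1}, & \alpha\ge 1,
\end{cases}
\]
and both bounds tend to~$0$: the first by~\eqref{condconst} together with $b_n^{p-1}\to\infty$ (which follows from the hypotheses $n/b_n^{p-1}\to 0$ and $p>1$), and the second directly by $n/b_n^{p-1}\to 0$. Hence $B_n\to 0$ in~$L^{1}$, so in probability, and combining with the estimate for $A_n$ finishes the argument. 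The point requiring real care is the case split: the constraint $q\ge\alpha$ in Lemma~\ref{expIneq} forces different choices of~$q$ on either side of $\alpha=1$, after which each of the two structural hypotheses~\eqref{condconst} and $n/b_n^{p-1}\to 0$ is precisely what is needed to kill one of the two resulting bounds.
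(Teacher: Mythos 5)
Your proof is correct, and its skeleton coincides with the paper's: same decomposition into a truncation error and a centred truncated sum, and the same treatment of the truncation error via Lemma~\ref{lemma1}. Where you genuinely diverge is the centred piece. The paper bounds it in $L^2$ by Chebyshev, invoking the covariance estimate $\bigl|E\bigl[(R_{ni}-ER_{ni})(R_{nj}-ER_{nj})\bigr]\bigr|\le C\,b_n^2/(a_ia_j)$ from Lemma 4 of \cite{GH2020} --- which is the only place hypothesis \eqref{F1cond} enters --- and finishes with $Cn^2/b_n^{2(p-1)}\to 0$. You bound it in $L^1$ by Markov, which sidesteps covariances entirely and therefore never uses \eqref{F1cond}; your argument in fact proves the theorem without hypothesis (i), a genuine gain in generality (at the cost of only an in-probability rather than $L^2$ control, which is all the statement asks for). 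Two small remarks. First, your motivating claim that ``variance-based arguments are unavailable'' without independence is not accurate --- the paper's whole point is that a covariance bound \emph{is} available for these particular variables under \eqref{F1cond}. Second, your case split around $\alpha=1$ and the appeal to Lemma~\ref{expIneq} are heavier machinery than needed: by the very definition \eqref{Rnjdef} one has $R_{nj}\le b_n/a_j$ pointwise, hence $ER_{nj}\le b_n/a_j$ and
\[
\frac{1}{b_n^{p}}\sum_{j=1}^n a_j\,ER_{nj}\;\le\;\frac{n}{b_n^{\,p-1}}\;\to\;0
\]
uniformly in $\alpha$, which is exactly the deterministic bound the paper itself uses for $E\xi_j^2$. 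Your sharper estimate for $\alpha<1$ via \eqref{condconst} is correct but not required.
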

\begin{proof}
	First, we write  
	\[
	\frac{1}{b_n^p}\sum_{j=1}^{n}a_j(R_j-ER_{nj})=\frac{1}{b_n^p}\sum_{j=1}^{n}a_j(R_j-R_{nj})+\frac{1}{b_n^p}\sum_{j=1}^{n}a_j(R_{nj}-ER_{nj}).
	\]
	Thus, the result will follow by proving that, as $n\to \infty$,
	\begin{equation}
	\label{conv1}\frac{1}{b_n^p}\sum_{j=1}^{n}a_j(R_j-R_{nj}) \mathop {\longrightarrow }^P 0
	\end{equation}
	and
		\begin{equation}
	\label{conv2}\frac{1}{b_n^p}\sum_{j=1}^{n}a_j(R_{nj}-ER_{nj})\mathop {\longrightarrow }^P 0.
	\end{equation}
	For \eqref{conv1} we have that $\forall \epsilon>0$ 
	\[
	P\left(\frac{1}{b_n^p}\left | \sum_{j=1}^{n}a_j(R_j-R_{nj}) \right |>\epsilon\right)\leq P\left(\displaystyle\cup_{j=1}^{n}\{R_j \neq R_{nj}\}\right)\leq \sum_{j=1}^{n} P\left(R_j>\frac{b_n}{a_j}\right) \to 0,\quad n\to \infty,
	\]
because of Lemma \ref{lemma1}.
	
	\medskip
	
	\noindent For \eqref{conv2}, starting with Markov inequality,  we have that $\forall \epsilon>0$ 
	\[
	P\left(\frac{1}{b_n^p}\left | \sum_{j=1}^{n}a_j(R_{nj}- ER_{nj}) \right |>\epsilon\right)\leq\frac{1}{b_n^{2p}\epsilon^2}E\left(   \sum_{j=1}^{n}a_j(R_{nj}- ER_{nj})\right)^2.
	\]
	We denote $\xi_j =R_{nj}- ER_{nj} $. Then,
	\begin{eqnarray*}
	E\left( \sum_{j=1}^{n} a_j\xi_j\right)^2 &=& \sum_{j=1}^{n} a_j^2E\xi_j^2 +2\sum_{i=1}^{n}\sum_{j=1}^{i-1}a_ia_jE(\xi_i\xi_j)\\
	&\leq& \sum_{j=1}^{n} a_j^2E\xi_j^2 + C\sum_{i=1}^{n}\sum_{j=1}^{i-1}a_ia_j\left( \frac{b_n^2}{a_ia_j}\right)\\
	&=&\sum_{j=1}^{n} a_j^2E\xi_j^2 + Cb_n^2\sum_{i=1}^{n}\sum_{j=1}^{i-1}1\\
	&\leq&\sum_{j=1}^{n} a_j^2E\xi_j^2 + Cn^2b_n^2,
	\end{eqnarray*}
where the first inequality is due to Lemma 4 in \cite{GH2020}. Note that $E\xi_j^2 \leq ER_{nj}^2 \leq \left(\dfrac{b_n}{a_j}\right)^2$. Thus, 
\[
E\left( \sum_{j=1}^{n} a_j\xi_j\right)^2\leq nb_n^2 +Cn^2b_n^2 \leq Cn^2b_n^2
\]	
which leads to
\[
P\left(\frac{1}{b_n^p}\left | \sum_{j=1}^{n}a_j(R_j-ER_{nj}) \right |>\epsilon\right)\leq \dfrac{Cn^2}{b_n^{2(p-1)}}\to 0
\]
as $n\to \infty$, which concludes the proof.
\end{proof}

\begin{remark}
	It is worth pointing out that condition \eqref{F2cond} implies \eqref{cond2F}.
\end{remark}

\medskip 
\noindent The next result is an immediate consequence of the weak law proven above.

	 



\begin{theorem}
	\label{weak3}Let $(R_n)_{n\geq 1}$ be as in \eqref{Rdef}. Under the assumptions of Theorem \ref{weak1} for $\alpha =1$,	
		\[
		\lim_{n\to \infty}\frac{1}{b_n^p}\sum_{j=1}^{n}a_jR_j = 0 \quad\mbox{in probability}.
		\]
\end{theorem}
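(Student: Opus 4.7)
The strategy is to reduce the claim to the centered weak law already established in Theorem~\ref{weak1} and then to show that the deterministic centering term vanishes. I would decompose
\[
\frac{1}{b_n^{p}}\sum_{j=1}^{n} a_j R_j \;=\; \frac{1}{b_n^{p}}\sum_{j=1}^{n} a_j (R_j - ER_{nj}) \;+\; \frac{1}{b_n^{p}}\sum_{j=1}^{n} a_j ER_{nj},
\]
and note that Theorem~\ref{weak1} immediately gives that the first term converges to zero in probability. Thus the whole task collapses to showing that the second, deterministic, term goes to zero as $n\to\infty$.

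For the deterministic term, I would invoke Lemma~\ref{expIneq} with exponent $q=1$. Under the present assumption $\alpha = 1$, this is precisely the boundary case $q=\alpha$ of that lemma, which yields the bound $ER_{nj} \leq D_{L'}\,(b_n/a_j)$ for every $L' > L$ and all sufficiently large $n$. Substituting and simplifying,
\[
\frac{1}{b_n^{p}}\sum_{j=1}^{n} a_j ER_{nj} \;\leq\; \frac{D_{L'}}{b_n^{p}}\sum_{j=1}^{n} a_j \cdot \frac{b_n}{a_j} \;=\; \frac{D_{L'}\, n}{b_n^{p-1}},
\]
and the right-hand side tends to $0$ thanks to the hypothesis $n/b_n^{p-1}\to 0$ inherited from Theorem~\ref{weak1}. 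Combining this with the convergence in probability of the centered sum gives the desired conclusion.

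There is essentially no serious obstacle: the factor $a_j$ cancels with the $b_n/a_j$ coming from the moment bound, reducing the sum to $n \cdot b_n$, and the rate assumption $n/b_n^{p-1}\to 0$ is exactly what is needed to kill this quantity. The only point requiring care is identifying which branch of Lemma~\ref{expIneq} applies: since one is estimating $ER_{nj}$ (a first moment) under $\alpha = 1$, one lands in the $q=\alpha$ case, where the bound is of order $(b_n/a_j)^{\alpha}=(b_n/a_j)$, rather than the cleaner $q>\alpha$ case with rate $(b_n/a_j)^{q-\alpha}$ that would apply to higher-moment estimates. Once this is recognized, the computation is immediate.
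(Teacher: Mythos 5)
Your proof is correct and follows essentially the same route as the paper: reduce to Theorem \ref{weak1} and show that the deterministic centering term is $O(n/b_n^{p-1})$. The only cosmetic difference is that the paper splits $ER_{nj}$ into $E[R_jI(R_j\le b_n/a_j)]$ plus $(b_n/a_j)P(R_j>b_n/a_j)$ and bounds each piece directly (the second via Lemma \ref{lemma1}), whereas you invoke Lemma \ref{expIneq}; in fact the pointwise bound $R_{nj}\le b_n/a_j$ immediate from \eqref{Rnjdef} already gives $ER_{nj}\le b_n/a_j$, so your appeal to the boundary case of Lemma \ref{expIneq} is harmless overkill.
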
 
\begin{proof}
	First we write 
	\begin{eqnarray*}
		\frac{1}{b_n^p}\sum_{j=1}^{n}a_jR_j &=& \frac{1}{b_n^p}\sum_{j=1}^{n}a_j(R_j-ER_{nj}) +	\frac{1}{b_n^p}\sum_{j=1}^{n}a_j\left(ER_{nj}- E\left(R_jI\left(R_j\leq \frac{b_n^p}{a_j}\right)\right)\right)\\
		&~~~~&+ \frac{1}{b_n^p}\sum_{j=1}^{n}a_jE\left(R_jI\left(R_j\leq \frac{b_n}{a_j}\right)\right)\\
		&=:& I_1+I_2+I_3.
	\end{eqnarray*}
	Observe that as $n\to \infty$, $I_1 \to 0$ because of Theorem \ref{weak1}. For $I_3$ notice that as $n\to \infty$,
	\[
	0\leq \frac{1}{b_n^p}\sum_{j=1}^{n}a_jE\left[R_jI\left(R_j\leq \frac{b_n}{a_j}\right)\right]\leq \frac{1}{b_n^p}\sum_{j=1}^{n}b_nP\left(R_j\leq \frac{b_n}{a_j}\right)\leq\dfrac{n}{b_n^{p-1}} \to 0
	\] 
and hence $I_3 \to 0$ as $n\to\infty$. The result follows by proving that $I_2$ also tends to $0$ as $n\to \infty$. First note that
	\[
	\sup_j\left(\frac{a_j}{b_n}\right)\leq \sum_{j=1}^{n}\frac{a_j}{b_n}\to 0\quad \mbox{as}\quad n\to \infty,
	\]
	which means that $\inf_j\frac{b_n}{a_j}\to\infty$. Moreover, 
	\[
	b_n = \dfrac{b_n}{\sup_ja_j}(\sup_ja_j )= \inf_j\frac{b_n}{a_j}(\sup_ja_j) \to \infty\quad \mbox{as}\quad n\to \infty.
	\]
	Thus, as $n\to \infty$
	\[ 
	\frac{1}{b_n^p}\sum_{j=1}^{n}a_j\left(ER_{nj}- E\left(R_jI\left(R_j\leq \frac{b_n}{a_j}\right)\right)\right) = \frac{1}{b_n^p}\sum_{j=1}^{n}a_j \left(\frac{b_n}{a_j}P\left(R_j >\frac{b_n}{a_j}\right)\right) =\frac{1}{b_n^{p-1}}\sum_{j=1}^{n} P\left(R_j >\frac{b_n}{a_j}\right)   \to 0 
	\]
	because of Lemma \ref{lemma1}.
\end{proof}

\begin{remark}
	Note that the convergence in probability for the sequence $(R_n)_{n\geq 1}$ has also been studied in \cite{GH2020} under different conditions (see for example Theorems 6 and 7 there).
\end{remark}

\medskip

\noindent In the next result we establish a moment inequality that will be used in the proof of the weak law that follows.

\begin{lemma}
Let $(R_n)_{n\geq 1}$ be as in \eqref{Rdef} and assume that \eqref{cond2F} is satisfied. Then, for sufficiently large $n$ and for some $\alpha>0$,
	\[
E\left(   \sum_{j=1}^{n}a_j(R_{nj}- ER_{nj})      \right)^2 \leq
\begin{cases}\displaystyle
Cnb_n^{2-\alpha}\sum_{j=1}^{n}a_j^\alpha, & 0<\alpha<2\\
\displaystyle Cnb_n\sum_{j=1}^{n}a_j, & \alpha =2\\
\displaystyle Cn\sum_{j=1}^{n}a_j^2, & \alpha>2,
\end{cases}
\]
where the symbol $C$ stands for a positive constant which may be different in every appearance.
\end{lemma}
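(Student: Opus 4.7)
I would follow the variance computation already carried out in the proof of Theorem \ref{weak1}, but exploit the sharper, $\alpha$-dependent second-moment bounds supplied by Lemma \ref{expIneq}. Setting $\xi_j := R_{nj} - ER_{nj}$ and expanding,
\[
E\left(\sum_{j=1}^{n} a_j \xi_j\right)^{2} = \sum_{j=1}^{n} a_j^2\, E\xi_j^2 \;+\; 2\sum_{1 \le j < i \le n} a_i a_j\, E(\xi_i \xi_j),
\]
the diagonal and off-diagonal parts are then estimated separately in each of the three regimes $0<\alpha<2$, $\alpha=2$, and $\alpha>2$.

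For the diagonal contribution, use $E\xi_j^2 \le E R_{nj}^2$. When $\alpha<2$, Lemma \ref{expIneq} with $p=2$ gives $E R_{nj}^2 \le C(b_n/a_j)^{2-\alpha}$, so the diagonal sum is at most $C b_n^{2-\alpha}\sum_j a_j^{\alpha}$. When $\alpha=2$, the sharp min form that appears inside the proof of Lemma \ref{expIneq} (valid for $n$ large enough) gives $E R_{nj}^2 \le C b_n/a_j$, producing $C b_n \sum_j a_j$. When $\alpha>2$, Lemma \ref{expIneq} is not directly applicable since $p=2<\alpha$; here I would argue from \eqref{cond2F} and \eqref{stdom} that $\sup_n EY_n^2 < \infty$ (the tail integral $\int_1^\infty s^{1-\alpha}\,{\rm d}s$ converges for $\alpha>2$), and then Lemma \ref{trunmom} upgrades this to $\sup_{n,j} ER_{nj}^2 \le C$, so that the diagonal contribution is bounded by $C\sum_j a_j^2$.

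For the cross terms I would apply Lemma 4 of \cite{GH2020} (the pairwise bound already invoked in the proof of Theorem \ref{weak1}), used in the Cauchy--Schwarz-type form $|E(\xi_i \xi_j)| \le C\sqrt{ER_{ni}^2\cdot ER_{nj}^2}$, and substitute the moment estimates above. For $\alpha<2$ the double sum becomes $Cb_n^{2-\alpha}\bigl(\sum_j a_j^{\alpha/2}\bigr)^{2}$, and a second Cauchy--Schwarz on the index $j$ converts this into $Cn\, b_n^{2-\alpha} \sum_j a_j^\alpha$. The analogous manipulations for $\alpha=2$ and $\alpha>2$ yield $Cn\,b_n\sum_j a_j$ and $Cn\sum_j a_j^2$ respectively. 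Adding diagonal and off-diagonal contributions recovers the three-case bound.

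The main obstacle is the $\alpha>2$ regime, which falls outside the hypotheses of Lemma \ref{expIneq}: one must separately establish a uniform second-moment bound for $R_{nj}$ by feeding the tail estimate implied by \eqref{cond2F} into the truncation identity \eqref{eq1}--\eqref{eq2} via \eqref{stdom}. A secondary technical point is verifying that $\bigl(\sum a_j^{\alpha/2}\bigr)^{2}\le n\sum a_j^\alpha$ (and its analogues for $\alpha\ge 2$) is indeed the intended route to produce the factor $n$ appearing in each case of the statement.
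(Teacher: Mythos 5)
Your proposal is correct, but it is organized differently from the paper's argument, and part of the machinery you invoke is unnecessary. The paper dispenses with the diagonal/off-diagonal decomposition entirely: it applies the elementary inequality $\left(\sum_{j=1}^n x_j\right)^2 \le n\sum_{j=1}^n x_j^2$ once, with $x_j = a_j(R_{nj}-ER_{nj})$, reducing everything to $n\sum_{j=1}^n a_j^2\,ER_{nj}^2$, and then evaluates $ER_{nj}^2 = 2\int_1^{b_n/a_j} tP(R_j>t)\,{\rm d}t \le 2\int_{a_j/b_n}^1 F_j(t)\,t^{-3}\,{\rm d}t \le C\int_{a_j/b_n}^1 t^{\alpha-3}\,{\rm d}t$ directly from \eqref{stdom} and \eqref{cond2F}; the three regimes then fall out of a single antiderivative, with no appeal to Lemma \ref{expIneq} and no separate treatment of $\alpha>2$. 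Your route lands in the same place: the cross-term bound $|E(\xi_i\xi_j)|\le\sqrt{ER_{ni}^2\,ER_{nj}^2}$ is plain Cauchy--Schwarz, so the citation of Lemma 4 of \cite{GH2020} is superfluous (that lemma gives the dependence-specific bound $Cb_n^2/(a_ia_j)$ used in Theorem \ref{weak1}, which would not yield the $\alpha$-dependent right-hand sides here); combined with your second Cauchy--Schwarz over the index, the whole expansion collapses back to $n\sum_j a_j^2\,ER_{nj}^2$. Your case analysis is sound, including the correct observation that for $\alpha=2$ one must use the sharper min-form from inside the proof of Lemma \ref{expIneq} rather than its stated conclusion, and your uniform bound $\sup_{n,j}ER_{nj}^2\le C$ for $\alpha>2$ (via $\sup_n EY_n^2<\infty$ and Lemma \ref{trunmom}) is a valid substitute where Lemma \ref{expIneq} does not apply. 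The only caveat, shared with the paper's own ``for sufficiently large $n$'', is that both arguments tacitly need $a_j/b_n$ uniformly small for $j\le n$ (essentially \eqref{condconst}) for the truncation levels $b_n/a_j$ to exceed $1$.
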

\begin{proof}
	\begin{equation}
\label{boundsumE}	E\left(   \sum_{j=1}^{n}a_j(R_{nj}- ER_{nj})      \right)^2 \leq n   \sum_{j=1}^{n}a_j^2E(R_{nj}- ER_{nj})^2\leq n\sum_{j=1}^{n}a_j^2ER_{nj}^2.
	\end{equation}
	By using the change of variables $\sqrt{u} = t$ and since $R_j \geq 1$, for sufficiently large $n$ we have that
	\begin{align*}
	&ER_{nj}^2 = 2\int_{0}^{\infty} tP(R_{nj}>t){\rm d}t = 2 \int_{1}^{b_n/a_j}tP(R_j>t){\rm d}t\leq 2\int_{1}^{b_n/a_j}tF_j\left(\frac{1}{t}\right){\rm d}t\\
	&= 2\int_{a_j/b_n}^{1}\frac{F_j(t)}{t^3}{\rm d}t\leq C \int_{a_j/b_n}^{1}\frac{1}{t^{3-\alpha}}{\rm d}t,
	\end{align*}
	where the first inequality follows from \eqref{stdom} while for the second one we apply arguments similar to the ones used for the proof of Lemma \ref{lemma1}. Thus,
		\begin{align*}
	&ER_{nj}^2 =\begin{cases}
	\dfrac{C}{\alpha-2}\left(1 - \left(\frac{a_j}{b_n}\right)^{\alpha-2}\right), & \alpha\neq 2\\
	C\log\left(\frac{b_n}{a_j}\right), & \alpha =2
	\end{cases}
	\,= \,\begin{cases}
	\dfrac{C}{2-\alpha}\left( \left(\frac{b_n}{a_j}\right)^{2-\alpha}-1\right), & \alpha\in (0,2)\\
	C\log\left(\frac{b_n}{a_j}\right), & \alpha =2\\
	\dfrac{C}{\alpha-2}\left(1 - \left(\frac{a_j}{b_n}\right)^{\alpha-2}\right), & \alpha> 2\\
	\end{cases}\\
	& \leq \begin{cases}
	\dfrac{C}{2-\alpha}\left(\frac{b_n}{a_j}\right)^{2-\alpha}, & \alpha\in (0,2)\\
	C\left(\frac{b_n}{a_j}\right), & \alpha =2\\
	\dfrac{C}{\alpha-2}, & \alpha>2.
	\end{cases}
	\end{align*}
	The desired result follows by combining the latter expression with \eqref{boundsumE}.
\end{proof}

\begin{theorem}
\label{weaknew1}Let $(R_n)_{n\geq 1}$ be as in \eqref{Rdef} and assume that the assumptions of Lemma \ref{lemma1} are satisfied for some $\alpha>0$ and let
\begin{equation*}
n\sum_{j=1}^{n}\left(\frac{a_j}{b_n}\right)^{\alpha\wedge 1}\to 0\quad \mbox{as}\quad n\to \infty.
\end{equation*}
Then,
\[
\frac{1}{b_n}\sum_{j=1}^{n}a_j(R_j-ER_{nj}) \mathop {\longrightarrow }^P 0,\quad n\to \infty.
\]	
\end{theorem}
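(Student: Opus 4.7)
The plan is to decompose
\[
\frac{1}{b_n}\sum_{j=1}^{n}a_j(R_j-ER_{nj}) = \frac{1}{b_n}\sum_{j=1}^{n}a_j(R_j-R_{nj}) + \frac{1}{b_n}\sum_{j=1}^{n}a_j(R_{nj}-ER_{nj})
\]
exactly as in the proof of Theorem \ref{weak1}, and to handle the two pieces separately.

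For the first summand, I would bound the probability of its being nonzero by $\sum_{j=1}^{n}P(R_j>b_n/a_j)$ and appeal to Lemma \ref{lemma1}. Before doing so I must check that the standing hypothesis implies condition \eqref{condconst}. When $\alpha\leq 1$ this is automatic, since $\alpha\wedge 1=\alpha$ and the extra factor $n$ in the hypothesis only makes the conclusion stronger. When $\alpha>1$, I would first observe that $\max_{j\leq n}(a_j/b_n)\leq\sum_{j=1}^{n}(a_j/b_n)\to 0$, so eventually $a_j/b_n\leq 1$ uniformly in $j\leq n$; this gives $(a_j/b_n)^{\alpha}\leq a_j/b_n$, and summing recovers \eqref{condconst} from the hypothesis.

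For the second summand, the plan is to apply Chebyshev's inequality together with the variance estimate established immediately before this theorem. After dividing by $b_n^{2}$, the three bounds there become $Cn\sum(a_j/b_n)^{\alpha}$ for $0<\alpha<2$, $Cn\sum(a_j/b_n)$ for $\alpha=2$, and $Cn\sum(a_j/b_n)^{2}$ for $\alpha>2$. I would then match each of these to the hypothesis $n\sum(a_j/b_n)^{\alpha\wedge 1}\to 0$: the cases $\alpha\leq 1$ and $\alpha=2$ are a direct identification, while for $1<\alpha<2$ or $\alpha>2$ the same uniform smallness argument used above yields $(a_j/b_n)^{r}\leq a_j/b_n$ for $r\geq 1$, so every case is dominated by $Cn\sum(a_j/b_n)$, which tends to $0$ by assumption.

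The only genuine obstacle is the uniform smallness observation $\max_{j\leq n}(a_j/b_n)\to 0$ that allows these power comparisons; once it is in hand, the rest is a mechanical case split, and no hypothesis on the dependence structure of $(R_n)$ is required beyond what was already used to derive the preceding variance bound.
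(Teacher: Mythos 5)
Your proposal is correct and follows essentially the same route as the paper: the same truncation decomposition, Lemma \ref{lemma1} for the untruncated part, and Chebyshev's inequality combined with the preceding variance lemma plus the comparison of powers of $a_j/b_n\leq 1$ for the truncated part. The only cosmetic difference is that you re-derive condition \eqref{condconst} from the new hypothesis in the case $\alpha>1$, whereas the theorem already assumes it as part of the hypotheses of Lemma \ref{lemma1}; this is harmless.
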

\begin{proof}
First, we write  
\[
\frac{1}{b_n}\sum_{j=1}^{n}a_j(R_j-ER_{nj})=\frac{1}{b_n}\sum_{j=1}^{n}a_j(R_j-R_{nj})+\frac{1}{b_n}\sum_{j=1}^{n}a_j(R_{nj}-ER_{nj}).
\]
Thus, the result will follow by proving that, as $n\to \infty$,
\begin{equation}
\label{conv1a}\frac{1}{b_n}\sum_{j=1}^{n}a_j(R_j-R_{nj}) \mathop {\longrightarrow }^P 0
\end{equation}
and
\begin{equation}
\label{conva2}\frac{1}{b_n}\sum_{j=1}^{n}a_j(R_{nj}-ER_{nj})\mathop {\longrightarrow }^P 0.
\end{equation}
For \eqref{conv1a} we have that $\forall \epsilon>0$ 
\[
P\left(\frac{1}{b_n}\left | \sum_{j=1}^{n}a_j(R_j-R_{nj}) \right |>\epsilon\right)\leq P\left(\displaystyle\cup_{j=1}^{n}\{R_j \neq R_{nj}\}\right)\leq \sum_{j=1}^{n} P\left(R_j>\frac{b_n}{a_j}\right) \to 0,\quad n\to \infty
\]
because of Lemma \ref{lemma1}.  
By utilizing Markov inequality we have that
\begin{align*}\
&P\left(\frac{1}{b_n}\left | \sum_{j=1}^{n}a_j(R_{nj}- ER_{nj}) \right |>\epsilon\right)\leq\frac{1}{b_n^2\epsilon^2}E\left(   \sum_{j=1}^{n}a_j(R_{nj}- ER_{nj})      \right)^2\\
&\leq
\begin{cases}
\displaystyle
Cn\sum_{j=1}^{n}\frac{a_j^\alpha}{b_n^{\alpha}}, & \alpha\in (0,1]\\
\displaystyle
Cn\sum_{j=1}^{n}\frac{a_j}{b_n}, & \alpha =2\\
\displaystyle Cn\sum_{j=1}^{n}\frac{a_j^2}{b_n^2} & \alpha>2.
\end{cases}
\end{align*}
Recall that for sufficiently large $n$ we have that $ \frac{a_j}{b_n}\leq 1$. Hence, for sufficiently large $n$,  and $\alpha \in (0,1]$  
\[
\left( \frac{a_j}{b_n}\right)^2\leq \left( \frac{a_j}{b_n}\right)\leq \left( \frac{a_j}{b_n}\right)^\alpha,
\]
for $\alpha \in [1,2]$
\[
\left( \frac{a_j}{b_n}\right)^2\leq \left( \frac{a_j}{b_n}\right)^\alpha\leq \left( \frac{a_j}{b_n}\right),
\]
while for $\alpha\geq 2$
\[
\left( \frac{a_j}{b_n}\right)^\alpha\leq \left( \frac{a_j}{b_n}\right)^2\leq \left( \frac{a_j}{b_n}\right).
\]
As a result, the probability calculated above tends to 0 when $n\to\infty$ according to the given set of conditions. 
\end{proof}

\medskip

\noindent The weak law that follows is obtained without any convergence assumption for the sequence of distribution functions $(F_n)_{n\geq 1}$. For the needs of the result, we recall here the definition of a slowly varying function at infinity. For more on slowly varying functions, the interested reader may refer to \cite{BGT1989}.

\begin{definition}
	A positive measurable function $f(x)$ defined on $[c,\infty)$ with $c\geq 0$ is said to be slowly varying at infinity if
	\[
	\lim_{x\to\infty}\frac{f(tx)}{f(x)} =1 \quad \mbox{for all} \quad t>0.
	\]
\end{definition}

\medskip

\begin{theorem}
\label{weaknew2}Let $\{R_n\}_{n\geq 1}$ be as in \eqref{Rdef} and $\{Y_n\}_{n\geq 1}$ as in \eqref{stdom} and assume that the function $H_j(x) = E(Y_jI(Y_j\leq x))$ is a slowly varying function at infinity for any integer $j$. Let $\{c_{nj}, 1\leq j\leq m_n, n\geq 1\}$ be a triangular array of real numbers such that
\begin{enumerate}
	\item[a.] $|c_{nj}|\leq 1$ for any integer $n$ and $1\leq j \leq m_n$;
	\item[b.]$m_n\sum_{j=1}^{m_n}c_{nj}^2\to0$ as $n\to \infty$;
	\item[c.] $\sup_{n} m_n\sum_{k=1}^{m_n}|c_{nj}|H_j(|c_{nj}|^{-1})<\infty$.
\end{enumerate}
Then, as $n\to \infty$,
\[
\sum_{j=1}^{m_n}c_{nj}(R_j - E(R_jI(|c_{nj}R_j|\leq 1))) \mathop {\longrightarrow }^P 0.
\]
\end{theorem}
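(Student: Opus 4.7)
The plan is to use a truncation argument. Define $T_{nj} = R_j I(R_j \leq |c_{nj}|^{-1})$, which equals $R_j I(|c_{nj}R_j| \leq 1)$ since $R_j \geq 0$, so that $ET_{nj}$ is exactly the centering appearing in the statement. The natural decomposition
\[
\sum_{j=1}^{m_n} c_{nj}(R_j - ET_{nj}) = \sum_{j=1}^{m_n} c_{nj}(R_j - T_{nj}) + \sum_{j=1}^{m_n} c_{nj}(T_{nj} - ET_{nj})
\]
reduces the theorem to showing that each of the two summands tends to $0$ in probability.

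For the first summand, note that $R_j$ and $T_{nj}$ differ only on $\{R_j > |c_{nj}|^{-1}\}$, so by \eqref{stdom},
\[
P\!\left(\sum_{j=1}^{m_n} c_{nj}(R_j - T_{nj}) \neq 0\right) \leq \sum_{j=1}^{m_n} P(R_j > |c_{nj}|^{-1}) \leq \sum_{j=1}^{m_n} P(Y_j > |c_{nj}|^{-1}).
\]
Since $H_j$ is slowly varying, one has the classical Karamata-type relation $xP(Y_j > x) = o(H_j(x))$ as $x \to \infty$. Because (b) forces $\max_j |c_{nj}| \to 0$, the truncation levels $|c_{nj}|^{-1}$ tend to infinity uniformly in $j$, so $P(Y_j > |c_{nj}|^{-1}) \leq \varepsilon_n |c_{nj}| H_j(|c_{nj}|^{-1})$ for some $\varepsilon_n \to 0$, and the right-hand sum is bounded by $\varepsilon_n \cdot m_n^{-1} \sup_n \bigl(m_n \sum_j |c_{nj}| H_j(|c_{nj}|^{-1})\bigr) \to 0$ using (c).

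For the second summand I would use Markov's inequality in $L^1$, which bypasses the lack of an independence assumption. Since $T_{nj} \geq 0$ gives $E|T_{nj} - ET_{nj}| \leq 2ET_{nj}$,
\[
P\!\left(\left|\sum_{j=1}^{m_n} c_{nj}(T_{nj} - ET_{nj})\right| > \varepsilon\right) \leq \frac{2}{\varepsilon} \sum_{j=1}^{m_n} |c_{nj}| ET_{nj}.
\]
Applying \eqref{trun1} with $q=1$ and $t=|c_{nj}|^{-1}$ yields $ET_{nj} \leq 1 + |c_{nj}|^{-1} P(Y_j > |c_{nj}|^{-1}) + H_j(|c_{nj}|^{-1})$, and then multiplying by $|c_{nj}|$ and summing gives
\[
\sum_{j=1}^{m_n} |c_{nj}| ET_{nj} \leq \sum_{j=1}^{m_n} |c_{nj}| + \sum_{j=1}^{m_n} P(Y_j > |c_{nj}|^{-1}) + \sum_{j=1}^{m_n} |c_{nj}| H_j(|c_{nj}|^{-1}).
\]
All three pieces vanish: the first by Cauchy-Schwarz, $\sum_j |c_{nj}| \leq \sqrt{m_n \sum_j c_{nj}^2} \to 0$ using (b); the second was just shown to vanish; the third is at most $C/m_n \to 0$ by (c).

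The main obstacle is ensuring that the $o$-term in $xP(Y_j > x) = o(H_j(x))$ has a rate uniform in $j = 1, \ldots, m_n$: the slowly-varying hypothesis is assumed only pointwise in $j$, whereas the arguments above require a common $\varepsilon_n \to 0$. I expect this uniformity to follow from the combination of $\max_j |c_{nj}| \to 0$ (so all truncation points cross any fixed threshold simultaneously) and condition (c) acting as a uniform majorant, but making this precise is the delicate step of the argument.
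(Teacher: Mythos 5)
Your decomposition is exactly the paper's (your $T_{nj}$ is the paper's $R'_{nj}$ and $R_j-T_{nj}$ its $R''_{nj}$), and your treatment of the untruncated part $\sum_j c_{nj}(R_j-T_{nj})$ coincides with the paper's: union bound, stochastic domination \eqref{stdom}, and the Karamata relation $xP(Y_j>x)=o(H_j(x))$ combined with condition (c). Where you genuinely diverge is the centered truncated sum. The paper applies Chebyshev in $L^2$, bounds $E\bigl(\sum_j c_{nj}(R'_{nj}-ER'_{nj})\bigr)^2\le m_n\sum_j c_{nj}^2E\bigl((R'_{nj})^2\bigr)$, controls $E(R_j^2I(R_j\le|c_{nj}|^{-1}))$ via Lemma \ref{trunmom} with $q=2$, and then needs the second-moment half of the Karamata estimate, $E(Y_j^2I(Y_j\le x))\le\delta xH_j(x)$, before invoking (b) and (c). Your $L^1$ route is simpler: it needs only \eqref{trun1} with $q=1$ and the tail half of the relation, and it exposes the fact that under (c) the centering term $\sum_j|c_{nj}|ET_{nj}$ is itself $o(1)$, so the conclusion is really $\sum_j c_{nj}R_j\to 0$ in probability; the price is that you forgo the variance cancellation, which the hypotheses here happen not to require. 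Two small remarks. First, your step ``the third piece is at most $C/m_n\to 0$'' tacitly assumes $m_n\to\infty$; if $m_n$ stays bounded you instead finish by noting that $\max_j|c_{nj}|\to 0$ (from (b)) and $H_j(x)/x\to 0$ for each slowly varying $H_j$, so the finitely many terms $|c_{nj}|H_j(|c_{nj}|^{-1})$ all vanish. The paper sidesteps this by keeping a free factor $\delta$ in front of $\sum_j|c_{nj}|H_j(|c_{nj}|^{-1})$ and sending $\delta\to 0$ after $n\to\infty$. Second, the uniformity in $j$ that you flag as the delicate point is exactly what the paper imports from Lemma 3 of \cite{DSY2018}: for every $\delta>0$ there is a single $x_0$ with $P(Y_j>x)\le\delta x^{-1}H_j(x)$ and $E(Y_j^2I(Y_j\le x))\le\delta xH_j(x)$ for all $x>x_0$ and all $j$; granting that cited lemma, your argument closes in the same way the paper's does.
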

\begin{proof}
	For $n\geq 1$ and $1\leq j\leq m_{n}$ we define
	\[
	R'_{nj} = R_jI(|c_{nj}R_j|\leq 1),\quad R''_{nj} = R_jI(|c_{nj}R_j|>1)\quad
	\mbox{and}\quad S'_n = \sum_{j=1}^{m_n}c_{nj}(R'_{nj}-ER'_{nj}).
	\]	
First, observe that
\begin{eqnarray*}
	\sum_{j=1}^{m_n}c_{nj}(R_j - E(R_jI(|c_{nj}R_j|\leq 1))) &=& \sum_{j=1}^{m_n}c_{nj}(R_jI(|c_{nj}R_j|\leq 1) - E(R_jI(|c_{nj}R_j|\leq 1)))\\
	&~~~~~+&\sum_{j=1}^{m_n}c_{nj}R_jI(|c_{nj}R_j|> 1)\\
	&=&\sum_{kj=1}^{m_n}c_{nj}(R'_{nj}-ER'_{nj}) +\sum_{j=1}^{m_n}c_{nj}R''_{nj} = S'_n + \sum_{j=1}^{m_n}c_{nj}R''_{nj}.
\end{eqnarray*}
Thus, the desired result will follow by proving that as $n \to \infty$ both summands tend to 0 in probability.
For the first term in the latter expression, note that for any $\epsilon>0$ we can write	
\begin{align*}
&P(|S_n'|>\epsilon) \leq \frac{E|S_n'|^2}{\epsilon^2} = \epsilon^{-2}E\left(\sum_{j=1}^{m_n}c_{nj}(R'_{nj}-ER'_{nj}) \right)^2\leq \epsilon^{-2}m_n\sum_{j=1}^{m_n}c^2_{nj}E(|R'_{nj}|^2)\\
&=m_n\epsilon^{-2}\sum_{j=1}^{m_n}c^2_{nj}E(R_j^2I(|c_{nj}R_j|\leq 1)) = m_n\epsilon^{-2}\sum_{j=1}^{m_n}c^2_{nj}E(R_j^2I(R_j\leq |c_{nj}|^{-1}))\\
&\leq m_n\epsilon^{-2}\sum_{j=1}^{m_n}c^2_{nj}(1+|c_{nj}|^{-2}P(Y_j>|c_{nj}|^{-1})+E(Y_j^2I(Y_j\leq |c_{nj}|^{-1}))),
\end{align*}	
by Lemma \ref{trunmom}.  Because of the assumption that $H_j(x)$ is a slowly varying function at infinity for every $j$,  Lemma 3 in \cite{DSY2018} ensures that for any $\delta>0$ there exists $x_0>0$ such that for $x>x_0$ and any integer $j$,
\[
P(Y_j>x)\leq \delta x^{-1}H_j(x)\quad\mbox{and}\quad E(Y_j^2I(Y_j\leq x))\leq \delta xH_j(x).
\]
Notice that because of the second assumption, as $n\to\infty$,
\[
|c_{nj}| \leq \left(m_n\sum_{j=1}^{m_n}c_{nj}^2\right)^{1/2} \to 0
\]
so, for sufficiently large $n$ and every $1\leq j\leq m_n$ we have 
\[
|c_{nj}|^{-1}>x_0.
\]
\noindent Thus, for any large enough $n$ we have for all $1\leq j\leq m_n$
\[
P(Y_j>|c_{nj}|^{-1})\leq \delta |c_{nj}|H_k(|c_{nj}|^{-1})\quad\mbox{and}\quad E(Y_j^2I(Y_j\leq |c_{nj}|^{-1}))\leq \delta |c_{nj}|^{-1}H_j(|c_{nj}|^{-1}).
\]	
Thus, 
\[
P(|S_n'|>\epsilon) \leq m_n\epsilon^{-2}\left(\sum_{k=1}^{m_n}c^2_{nk}+ 2\sum_{k=1}^{m_n} \delta |c_{nk}|H_k(|c_{nk}|^{-1})\right) \to 0,
\]
if we allow $n\to \infty$ and then $\delta \to 0$. Furthermore, for $\epsilon>0$ and due to the stochastic dominance of $R_n$ from $Y_n$, we have
\begin{align*}
&P\left(\left |\sum_{k=1}^{m_n}c_{nk}R''_k\right |>\epsilon \right)\leq \sum_{k=1}^{m_n}P(|c_{nk}R_k|>1) = \sum_{k=1}^{m_n}P(R_k>|c_{nk}|^{-1}) \\
&\leq \sum_{k=1}^{m_n}P(Y_k>|c_{nk}|^{-1}) \leq \delta \sum_{k=1}^{m_n}  |c_{nk}|H_k(|c_{nk}|^{-1})\to 0,
\end{align*}
as $n\to \infty$ and by letting $\delta \to 0$. 
\end{proof}

\begin{remark}
	It can easily be verified that under the assumptions of Theorems 4 and 5 in \cite{GH2020}, the quantity $EY_kI(Y_k\leq x)$ is a slowly varying function at infinity.
\end{remark}

\section{A strong law of large numbers}

In this section we provide a strong law of large numbers for the sequence $(R_n)_{n\geq 1}$.

\begin{theorem} Let $\{R_n\}_{n\geq 1}$ be as in \eqref{Rdef} with $(F_n)_{n\geq 1}$ satisfying conditions \eqref{F1cond} and \eqref{F2cond} for $\alpha = 1$. Then, for $\beta>0$ and $p\geq 2$,
	\begin{equation}
	\label{stlaw}\frac{1}{\rho(n)\log^\beta n}\sum_{j=1}^{n}\frac{\log^{\beta-p}j}{j}R_j
 \to 0	\quad \mbox{a.s.}
 \end{equation}
 for every sequence  $\rho(n)$ such that  $\sum_{n=1}^{\infty}1/\rho^2(n)$ converges.
\end{theorem}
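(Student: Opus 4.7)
My strategy is to reduce everything to Kronecker's lemma. Set $b_n:=\rho(n)\log^\beta n$ and $a_j:=\log^{\beta-p}j/j$, so that
\[
\frac{1}{b_n}\sum_{j=1}^n a_j R_j \;=\; \frac{1}{b_n}\sum_{j=1}^n b_j\,y_j,\qquad y_j:=\frac{a_j R_j}{b_j}=\frac{R_j}{j\,\rho(j)\log^p j}.
\]
Taking $b_n$ to be non-decreasing and tending to infinity (an implicit mild regularity on $\rho$ which is naturally consistent with $\sum 1/\rho^2<\infty$), Kronecker's lemma reduces the desired almost sure convergence to proving $\sum_{j\geq 1} y_j<\infty$ almost surely.

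To establish this, I truncate at $c_j:=j\log^p j$ and split $y_j=y_j\,I(R_j\leq c_j)+y_j\,I(R_j>c_j)$. Combining \eqref{stdom} with condition \eqref{F1cond} gives $P(R_j>c_j)\leq F_j(1/c_j)\leq M/(j\log^p j)$, which is summable since $p\geq 2>1$. By the Borel--Cantelli lemma, $R_j\leq c_j$ for all but finitely many $j$ a.s., so the tail piece $\sum_j y_j I(R_j>c_j)$ is almost surely a finite random sum. It therefore remains to show
\[
\sum_{j\geq 1}\frac{R_j\,I(R_j\leq c_j)}{j\,\rho(j)\log^p j}<\infty\quad\text{a.s.},
\]
and by Tonelli's theorem (non-negative summands) this will follow from the finiteness of the corresponding series of expectations.

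To bound $E[R_j I(R_j\leq c_j)]$ I invoke Lemma~\ref{trunmom} (relation \eqref{trun1}) with $q=1$ and $t=c_j$, obtaining $E[R_jI(R_j\leq c_j)]\leq 1+c_j P(Y_j>c_j)+E[Y_jI(Y_j\leq c_j)]$. Condition \eqref{F2cond} with $\alpha=1$ translates into $P(Y_j>x)\leq L'/x$ uniformly in $j$ for all $x$ large, so $c_j P(Y_j>c_j)\leq L'$, and the layer-cake identity $E[Y_j\wedge c_j]=\int_0^{c_j}P(Y_j>s)\,ds$ yields $E[Y_j I(Y_j\leq c_j)]\leq C+L'\log c_j$. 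Hence $E[R_j I(R_j\leq c_j)]=O(\log c_j)=O(\log j)$, and
\[
\sum_{j\geq 1}\frac{E[R_jI(R_j\leq c_j)]}{j\,\rho(j)\log^p j}\;\leq\;C\sum_{j}\frac{1}{j\,\rho(j)\log^{p-1}j}\;\leq\;C\sum_j\frac{1}{j\,\rho(j)\log j}
\]
(using $p\geq 2$). The AM--GM bound $\tfrac{1}{j\rho(j)\log j}\leq \tfrac{1}{2}\bigl(\tfrac{1}{\rho^2(j)}+\tfrac{1}{j^2\log^2 j}\bigr)$ reduces convergence to the hypothesis $\sum 1/\rho^2(j)<\infty$ and the standard bound $\sum 1/(j^2\log^2 j)<\infty$, completing the argument.

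The main delicate point is the monotonicity needed by Kronecker's lemma: the convergence of $\sum 1/\rho^2(n)$ forces $\rho(n)\to\infty$ but does not by itself guarantee monotonicity of $\rho$ (hence of $b_n$). In the non-monotone case one should either assume $\rho$ is eventually non-decreasing (standard in this literature) or work with a monotone majorant; apart from this the scheme---Borel--Cantelli for the tail, Tonelli for the bulk, then Kronecker---is entirely routine and conveniently avoids any covariance estimates for the $R_j$.
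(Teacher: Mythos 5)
Your scheme is genuinely different from the paper's and, apart from one point, sound: by not centering at all you avoid every covariance estimate, reducing the claim to the a.s.\ convergence of the random series $\sum_j R_j/(j\,\rho(j)\log^p j)$ (Borel--Cantelli for the tail above $c_j=j\log^p j$, Tonelli plus the moment bound \eqref{trun1} for the truncated part) followed by Kronecker's lemma. Those estimates all check out: $P(R_j>c_j)\leq F_j(1/c_j)\leq M/c_j$ is summable for $p\geq 2$, the bound $E[R_jI(R_j\leq c_j)]=O(\log j)$ is exactly the one the paper also derives, and your AM--GM step correctly converts $\sum_j 1/\rho^2(j)<\infty$ into $\sum_j 1/(j\,\rho(j)\log^{p-1}j)<\infty$.

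The genuine gap is the one you flag yourself, and neither of your proposed repairs closes it. Kronecker's lemma really does require the weights $b_n=\rho(n)\log^\beta n$ to be eventually non-decreasing, and $\sum_n 1/\rho^2(n)<\infty$ forces $\rho(n)\to\infty$ but allows arbitrarily deep sparse dips. The monotone majorant $\rho'(n)=\max_{k\leq n}\rho(k)$ goes the wrong way: proving $S_n/(\rho'(n)\log^\beta n)\to 0$ normalizes by something \emph{larger} and does not imply the statement for $\rho$; the monotone minorant $\inf_{m\geq n}\rho(m)$ gives the right implication but need not satisfy $\sum 1/\rho^2<\infty$. For $p>2$ you can rescue the argument by running Kronecker with the monotone weights $\log^\beta n$ instead, since then $\sum_j 1/(j\log^{p-1}j)<\infty$, and afterwards dividing by $\rho(n)\to\infty$; but for $p=2$, which the theorem includes, $\sum_j 1/(j\log j)=\infty$, so the factor $\rho(j)$ inside the series is indispensable and the non-monotone weights cannot be avoided. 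The paper sidesteps Kronecker entirely: it splits off the deterministic sum of truncated means, which is $O(\log^{\beta-p+2}n)=o(\rho(n)\log^\beta n)$ using only $\rho(n)\to\infty$, and controls the centered truncated sum by Chebyshev and Borel--Cantelli, which is where $\sum 1/\rho^2(n)<\infty$ and the covariance bounds coming from \eqref{F1cond} enter. To complete your proof as stated you must either add a monotonicity hypothesis on $\rho$ or replace the Kronecker step in the case $p=2$ by such a centering argument.
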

\begin{proof}
	Let $a_j = \displaystyle \frac{\log^{\beta-p}j}{j}$, $b_j = \log^{\beta}j$ and $\displaystyle c_j = \frac{b_j}{a_j} = j \log^p j$. We define the sequence of functions  $g_n: [1,\infty)\to [0,\infty)$ 
	\[
	g_n(x) = xI(x\leq c_n)+c_nI(x>c_n)
	\]
	and observe that the LHS of \eqref{stlaw} can be equivalently written as 
	\begin{eqnarray*}
	\frac{1}{\rho(n)b_n}\sum_{j=1}^{n}a_jR_j &=& \frac{1}{\rho(n)b_n}  \sum_{j=1}^{n}a_j(g_j(R_j)-E(g_j(R_j)))\\
	&~~~~~~~&+\frac{1}{\rho(n)b_n}\sum_{j=1}^{n}a_j[ R_jI(R_j>c_j)-c_jI(R_j>c_j)] \\
	&~~~~~~~&+\frac{1}{\rho(n)b_n}\sum_{j=1}^{n}a_j [ER_jI(R_j\leq c_j)+c_jP(R_j>c_j)]\\
	&=:&I_1+I_2+I_3.
	\end{eqnarray*}
We start by studying $I_2$. 
\begin{eqnarray*}
|I_2| &=& \left|\frac{1}{\rho(n)b_n}\sum_{j=1}^{n}a_j[ R_jI(R_j>c_j)-c_jI(R_j>c_j)]\right|\\
&\leq& \frac{1}{\rho(n)b_n}\sum_{j=1}^{n}a_j(R_jI(R_j>c_j)+R_jI(R_j>c_j))\\
&=& \frac{2}{\rho(n)b_n} \sum_{j=1}^{n}a_jR_jI(R_j>c_j).
\end{eqnarray*}

\medskip

\noindent Notice that $\exists n_0$ such that for $\forall j>n_0, \, c_j>1$ so \eqref{stdom} holds true with $x = c_j$. Thus, 
\begin{eqnarray*}
\sum_{j=1}^{\infty}P(R_j>c_j) &=& \sum_{j=1}^{n_0}P(R_j>c_j) + \sum_{j=n_0+1}^{\infty}P(R_j>c_j)\\
& \leq& \sum_{j=1}^{n_0}P(R_j>c_j)+\sum_{j=n_0+1}^{\infty}P(Y_j>c_j).
\end{eqnarray*}
The first term is finite while for the second term, because of \eqref{F2cond} we have that
\[
xP(Y_n>x) \to c \quad x\to \infty,
\]
so there is constant $C$ such that for sufficiently large $x$, $P(Y_n>x)<C/x$. Thus, there exists $n_1$ such that $P(Y_j>c_j) \leq \frac{c}{jlog^pj}$ for $j>n_1$. Hence, by taking $n_0>n_1$ we get
\[
\sum_{j=1}^{\infty}P(R_j>c_j) \leq\sum_{j=1}^{n_0}P(R_j>c_j)+\sum_{j=n_0+1}^{\infty}\frac{C}{j\log^p j}<\infty.
\]
Hence, by Borel-Cantelli we get $P(R_j< c_j \, \, \mbox{ultimately})=1$ and therefore
\[
|I_2|\leq \frac{2}{\rho(n)b_n}\sum_{j=1}^{n}a_j R_jI(R_j>c_j) \to 0\quad\mbox{a.s.}
\]
since $\rho(n)b_n\to \infty$, which implies $\frac{2}{\rho(n)b_n}\sum_{j=1}^{n}a_j R_jI(R_j>c_j) \to 0$.
\noindent For $I_3$ we start by defining
\[
J = \frac{1}{\rho(n)b_n}\sum_{j=1}^{n} a_jc_jP(R_j>c_j).
\]
For some positive integer $n_0$ and for any $j> n_0$, we have $c_j>1$. Thus, 
\begin{eqnarray*}
J&=&\sum_{j=1}^{n_0}\frac{b_j}{\rho(n)b_n}P(R_j>c_j)+\sum_{j=n_0+1}^n\frac{b_j}{\rho(n)b_n}P(R_j>c_j)\\
&\leq& \sum_{j=1}^{n_0}\frac{b_j}{\rho(n)b_n} P(R_j>c_j) +\sum_{j=n_0+1}^n\frac{b_j}{\rho(n)b_n}P(Y_j>c_j)\\
&=:&J_1+J_2
\end{eqnarray*}
where the second inequality is due to \eqref{stdom}. Clearly, as $n\to \infty$ the first term $J_1\to 0$. For the second term we use again the relation $P(Y_n>x)<C/x$ and  we have
\[
J_2 \leq \frac{C}{\rho(n)b_n}\sum_{j=n_0+1}^n\frac{b_j}{c_j}=\frac{C}{\rho(n)b_n}\sum_{j=n_0+1}^n \frac{\log^{\beta-p}j}{j}.
\]

\noindent
By Cesaro's theorem the sequence
\begin{align*}&
\frac{1}{ b_n}\sum_{j=n_0+1}^n \frac{\log^{\beta-p}j}{j}= \frac{1}{\log^{\beta }n  }\sum_{j=n_0+1}^n \frac{\log^{\beta-p}j}{j}
\end{align*}
has the same limit as
\begin{align*}&
\frac{\log^{\beta-p}n}{n}\cdot \frac{1}{\log^{\beta }n-\log^{\beta }(n-1)}\sim \frac{ \log^{\beta-p}n}{\beta \log^{\beta -1}n}= \frac{1}{\beta\log^{p-1}n}\to 0, \qquad n\to\infty,
\end{align*}
which leads to the conclusion that $J\to 0$ as $n\to\infty$. 

\medskip

\noindent Next, we study the term $ER_jI(R_j\leq c_j)$. From \eqref{trun1} in Lemma \ref{trunmom},  again by  the relation $P(Y_n>x)<C/x$, we have that for $c_j> 1$ 
\begin{eqnarray*}
ER_jI(R_j\leq c_j) &\leq&1+ c_jP(Y_j>c_j)+E(Y_jI(Y_j\leq c_j))\\
&\leq&C + \int_{1}^{c_j}P(Y_j>x){\rm d}x\\
&\leq& C + C_1\int_{1}^{c_j} \frac{1}{x}{\rm d}x\\
&=& C(1+\log c_j)\\
&\sim& C\log j,
\end{eqnarray*}
where the last result can be easily obtained by observing that $\log c_j = \log j + p \log \log j \sim \log j$. Consider $n_0$ such that for all $j>n_0,\, c_j >1$. Then, as $n\to \infty$,
\[
\sum_{j=1}^{n}\frac{a_j}{\rho(n)b_n}E(R_jI(R_j\leq c_j)) \sim \frac{1}{\rho(n)(\log n)^{\beta}}\sum_{j=1}^{n_0}a_jER_jI(R_j\leq c_j)+\frac{C}{\rho(n)(\log n)^{\beta}}\sum_{j=n_0+1}^{n}\frac{(\log j)^{\beta-p+1}}{j} \to  0
\]
where the convergence to 0 follows since
$$\frac{1}{\rho(n)(\log n)^{\beta}}\sum_{j=n_0+1}^{n}\frac{(\log j)^{\beta-p+1}}{j}\sim \frac{(\log n)^{\beta-p+2}}{\rho(n)(\log n)^{\beta}}= \frac{1}{ \rho(n)(\log n)^{p-2}}\to 0,\quad n\to \infty. $$
\medskip

\noindent For $I_1$ we need to prove that 
\[
\frac{1}{\rho(n)b_n}\sum_{j=1}^{n}a_j(g_j(R_j)-E(g_j(R_j))) \to 0\quad\mbox{a.s.}
\]
The arguments are similar to the ones used in \cite{GH2020}. Define $S_n = \displaystyle\sum_{j=1}^{n}a_jg_j(R_j)$. It is sufficient to prove that $\forall \epsilon>0$
\begin{equation}
\label{complconv}\sum_{n=1}^{\infty}P\left(\frac{1}{\rho(n)b_n}|S_n-ES_n|>\epsilon\right)<\infty
\end{equation}
and the desired result will follow by applying the Borel-Cantelli lemma.  In order to prove the convergence result described in \eqref{complconv}, the Chebyshev's inequality will be employed. Thus,  it is sufficient to prove that
\begin{equation}
\label{var1}\sum_{n=1}^{\infty}\frac{\Var S_n}{\rho^2(n) b_n^2} <\infty.
\end{equation}
 Conditions \eqref{F1cond} and \eqref{F2cond} (for $\alpha =1$) ensure that 
\[
|\Cov(g_i(R_i),g_j(R_j))|\leq C(\log i+ \log j) 
\]
and
\[
\Var g_j(R_j) \leq Cc_j= Cj(\log j)^p.
\]
where $C$ is a positive constant (Lemmas 4 and 5 respectively in \cite{GH2020}). Using the above expressions for $W_j = \displaystyle \frac{(\log j)^{\beta-p}}{j}g_j(R_j)$  we have that
\[
|\Cov(W_i,W_j)|\leq \frac{C(\log i\log j)^{\beta-p}}{ij}(\log i + \log j)
\]
and 
\begin{eqnarray*}
\sum_{j=1}^{n}\Var(W_j) &=& \sum_{j=1}^{n}\Var\left( \frac{(\log j)^{\beta-p}}{j}g_j(R_j)\right)\\
&=&\sum_{j=1}^{n} \frac{(\log j)^{2\beta-2p}}{j^2}\Var(g_j(R_j))\\
&\leq&\sum_{j=1}^{n} \frac{(\log j)^{2\beta-p}}{j}\\
&<&C(\log n)^{2\beta-p+1}.
\end{eqnarray*}
Moreover,
\begin{eqnarray*}
\sum_{1\leq i<j\leq n}|\Cov (W_i,W_j)|&\leq& C\sum_{j=2}^{n}\sum_{i=1}^{j-1}\frac{(\log i\log j)^{\beta-p}}{ij}(\log i + \log j)\\
&<&C(\log n)^{2\beta -2p+3}.
\end{eqnarray*}
Thus,
\[
\Var S_n = \sum_{j=1}^{n}\Var(W_j) + 2\sum_{1\leq i<j\leq n}\Cov (W_i,W_j) < C(\log n)^{2\beta-p+1} + C(\log n)^{2\beta -2p+3}< C(\log n)^{2\beta-p+1}.
\]
Hence,
\[
\sum_{n=1}^{\infty}\frac{\Var S_n}{\rho^2(n) b_n^2}< C\sum_{n=1}^{\infty} \frac{(\log n)^{2\beta-p+1} }{\rho^2(n)(\log n)^{2\beta}} = C\sum_{n=1}^{\infty}\frac{1}{\rho^2(n)( \log n)^q}<\infty,\, q= p-1\geq 1,
\]
 which leads to the desired result.
\end{proof}

\begin{remark}
It is worth mentioning that a result on almost sure convergence for the weighted partial sums of $\{R_n\}_{n\geq 1}$ can also be found in Theorem 11 of \cite{GH2020} but for different sequences of positive numbers.
\end{remark}

\noindent\textbf{Acknowledgement}
	Prof. R. Giuliano wishes to thank UCLan Cyprus and the University of Cyprus for their hospitality; part of the present paper was completed during her visit in Cyprus.

\end{document}